\documentclass[a4paper,numbook,babel,final,envcountsame,11pt]{article}

\usepackage{amssymb}
\usepackage{amsthm}
\usepackage{amsmath}
\usepackage{graphicx}
\usepackage{array,hhline}



\numberwithin{figure}{section}

\setcounter{equation}{0}

\newcommand{\Aset}{\mathcal{A}}
\newcommand{\Bset}{\mathcal{B}}
\newcommand{\Cset}{\mathcal{C}}

\newcommand{\Xset}{\mathcal{X}}
\newcommand{\Yset}{\mathcal{Y}}

\newtheorem{thm}{Theorem}[section]

\newtheorem*{cor*}{Corollary}
\newtheorem{defn}[thm]{Definition}

\newtheorem{proposition}[thm]{Proposition}
\newtheorem{lem}[thm]{Lemma}
\begin{document}

\title{Sets of lines passing through one point each of three sets
of points}

\title{\Large{\bf On a result concerning \ algebraic curves \ passing\\  through\ $n$-independent \ nodes}}
\author{H. A. Hakopian\\ \\
\emph{Department of Informatics and Applied Mathematics,}\\  \emph{Yerevan State University,}\\ \emph{Institute of Mathematics of NAS RA}}

\date{}

\maketitle
\begin{abstract}
 Let a set of nodes $\mathcal X$ in the plane be $n$-independent, i.e., each node has a fundamental polynomial of degree $n.$ Assume that\\
$\#\mathcal X=d(n,n-3)+3= (n+1)+n+\cdots+5+3.$ In this paper we prove that there are at most three linearly independent curves of degree less than or equal to $n-1$ that pass through all the nodes of $\mathcal X.$ We provide a characterization of the case when there are exactly three such curves. Namely, we prove that then the set $\mathcal X$ has a very special construction: either all its nodes belong to a curve of degree $n-2,$ or  all its nodes but three belong to a (maximal) curve of degree $n-3.$

This result complements a result established recently by H. Kloyan, D. Voskanyan, and H. H. Note that the proofs of the two results are completely different.
\vskip 4pt
\textbf{MSC2010:} 41A05, 41A63, 14H50. 

\vskip 4pt
\textbf{\textit{Keywords}:} algebraic curve,  maximal curve,  fundamental polynomial, $n$-independent nodes.
\end{abstract}

\parindent=1cm

\section{Introduction}

 Denote the space of all bivariate polynomials of total degree $\le n$ by $$\Pi_n= \left\{\sum_{i+j\leq n} a_{ij} x^i y^j \right\}.$$
We have that
$ N := N_n := \dim \Pi_n = (1/2)(n+1)(n+2).$

Denote by $\Pi$ the space of all bivariate polynomials.

A \emph{plane algebraic curve} is the zero set of some bivariate polynomial of degree $\ge 1.$~To simplify notation, we shall use the same letter,  say $p$,
to denote the polynomial $p\in\Pi$ and the curve given by the equation $p(x,y)=0$.
In particular, by $\ell$ we denote a linear
polynomial from $\Pi_1$ and the line defined by the equation
$\ell(x, y)=0.$

Consider a set of $s$ distinct nodes ${\mathcal X}= {\mathcal X}_s = \{(x_1,y_1), (x_2,y_2), \ldots, (x_s,y_s) \}.$
 The problem of finding a polynomial $p \in \Pi_n,$ which satisfies the conditions
\begin{equation}\label{eq:intpr}
  p(x_i,y_i) = c_i, \quad i=1, \ldots, s,
\end{equation}
is called interpolation problem.

Denote by $p\big\vert_{\mathcal X}$ the restriction of $p\in\Pi$ on $\mathcal X.$

A polynomial $p \in \Pi_n$ is called a fundamental polynomial for a node \linebreak$A\in{\mathcal X}$ if
$p(A)=1\ \ \text{and}\ \ p\big\vert_{{\mathcal X}\setminus\{A\}} = 0.$ 

We denote this $n$-fundamental polynomial
by $p_{A}^\star:=p_{A,\Xset}^\star.$

\begin{defn} \label{poised}
The interpolation problem with a set of nodes ${\mathcal X}_s$ is called $n$-poised if for any data $(c_1,\ldots, c_s)$
there is a unique polynomial $p\in\Pi_n$ satisfying the interpolation conditions \eqref{eq:intpr}.
\end{defn}
A necessary condition of
poisedness is $\#{\mathcal X}_s=s = N.$

Now, let us consider the concept of $n$-independence (see \cite{E,HJZ}).

\begin{defn}
A set of nodes ${\mathcal X}_s$ is called $n$-independent, if all its
nodes have $n$-fundamental polynomials. Otherwise, it is called $n$-dependent.
\end{defn}

Fundamental polynomials are linearly independent.~Therefore a necessary condition of $n$-independence for ${\mathcal X}_s$
is $s \leq N$.

In this paper we consider $n$-independence more generally. Namely, we admit possibility
to include  in the $n$-independent set ${\mathcal X}_s$ a \emph{directional derivative node}, denoted by $A^{(k)}.$ We have that $p(A^{(k)}):=D_{\bf a}^kp(A),$ where $p\in\Pi,\ {\bf a}$  is a direction, and $k\in\mathbb{N}.$ For a node $A^{(k)}$  we assume in addition that
$$p\in\Pi_n,\ p|_\Xset=0 \implies  D_{\bf a}^ip(A)=0,\ i=0,\ldots,k-1.$$
The set $\Xset\cup\{A^{(k)}\}$ is $n$-independent means that $\Xset$ is $n$-independent and the node $A^{(k)}$ has an $n$-fundamental polynomial $p=p^\star_{A^{(k)}}:$
$$p\in\Pi_n,\ p|_\Xset=0,\ D_{\bf a}^kp(A)=1.$$

We say that a node $A^{(k)}$ belongs to a curve $q$ if $D_{\bf a}^ip(A)=0,\ i=0,\ldots,k.$ In particular $A^{(k)}$ belongs to a line $\ell$ if $A\in \ell$ and ${\bf a}$ is the direction vector of $\ell.$

Let us mention, as it can be readily verified, that  all the results we present below concerning  $n$-independent sets hold true for the above mentioned generalization.

\subsection{Some properties of $n$-independent nodes}

 Let us start with the following
\begin{lem}[Lemma 2.2, \cite{HM}] \label{XA}
Suppose that a set of nodes $\Xset$ is $n$-independent and a node $A\notin\Xset$ has
an $n$-fundamental polynomial with respect to the set $\Xset\cup\{A\}.$ Then the latter set  is $n$-independent too.
\end{lem}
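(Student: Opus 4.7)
The plan is straightforward: show directly that every node of $\Xset \cup \{A\}$ has an $n$-fundamental polynomial relative to the enlarged set. The node $A$ already has one by hypothesis (call it $p^\star_A$, with $p^\star_A(A)=1$ and $p^\star_A|_\Xset = 0$), so the task reduces to producing, for each $B \in \Xset$, a polynomial in $\Pi_n$ that equals $1$ at $B$ and vanishes on $(\Xset \setminus \{B\}) \cup \{A\}$.

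The key idea is a one-step correction. Since $\Xset$ is $n$-independent, $B$ has a fundamental polynomial $p^\star_B = p^\star_{B,\Xset} \in \Pi_n$ with $p^\star_B(B)=1$ and $p^\star_B|_{\Xset\setminus\{B\}}=0$; the only defect is that $p^\star_B(A)$ may be nonzero. I would remove this defect by setting
$$q_B := p^\star_B - p^\star_B(A)\, p^\star_A \in \Pi_n.$$
Because $p^\star_A$ vanishes identically on $\Xset$, we get $q_B(B) = 1 - p^\star_B(A)\cdot 0 = 1$ and $q_B(C)=0$ for every $C \in \Xset \setminus\{B\}$; by construction $q_B(A) = p^\star_B(A) - p^\star_B(A)\cdot 1 = 0$. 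Thus $q_B$ is the required $n$-fundamental polynomial for $B$ with respect to $\Xset \cup \{A\}$, and $\Xset \cup \{A\}$ is $n$-independent.

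There is no real obstacle; the statement is essentially a bookkeeping fact about fundamental polynomials, proved in one line once the corrected polynomial is written down. The only point worth checking is that the same correction handles the directional derivative generalization noted in the introduction: for a derivative node $A^{(k)}$, the scalar $p^\star_B(A)$ is replaced by $D_{\bf a}^k p^\star_B(A)$, and the verification goes through verbatim since $p^\star_A$ still annihilates all the conditions imposed by the original nodes of $\Xset$.
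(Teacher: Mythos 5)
Your proof is correct: the one-step correction $q_B := p^\star_B - p^\star_B(A)\,p^\star_A$ is exactly the standard argument, and together with $p^\star_A$ itself it furnishes fundamental polynomials for all nodes of $\Xset\cup\{A\}$. The paper does not reprove this lemma (it is quoted from Hakopian--Malinyan, Lemma 2.2 of \cite{HM}), but your argument is essentially the proof given there, and your remark that the same subtraction works for a derivative node $A^{(k)}$ (using $D_{\bf a}^k p^\star_B(A)$ as the coefficient) is consistent with the paper's claim that its results extend to that generalization.
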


Denote the distance between the points $A$ and $B$ by $\rho(A,B).$ Let us recall the following (see Rem. 1.14, \cite{H})
\begin{lem} \label{eps'}
Suppose that ${\mathcal X}_s=\{A_i\}_{i=1}^s$ is an $n$-independent set. Then there is a number $\epsilon>0$ such that
any set ${\mathcal X}_s'=\{A_i'\}_{i=1}^s,$ with the property that  $\rho(A_i,A_i')<\epsilon,\ i=1,\ldots,s,$  is $n$-independent too.
\end{lem}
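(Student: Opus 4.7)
The plan is to recast $n$-independence as a (maximal) rank condition on an evaluation matrix and then invoke continuity of determinants.

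First I would fix a basis $\{\phi_1,\dots,\phi_N\}$ of $\Pi_n$ (for instance the monomial basis $x^iy^j$, $i+j\le n$) and form the $s\times N$ evaluation matrix $M(\Xset_s)$ whose $(i,j)$-entry is $\phi_j(A_i)$. The key observation is that a node $A_i$ has an $n$-fundamental polynomial with respect to $\Xset_s$ if and only if the standard basis vector $e_i\in\mathbb{R}^s$ lies in the column span of $M(\Xset_s)^T$, equivalently in the row span of $M(\Xset_s)$. Hence $\Xset_s$ being $n$-independent is equivalent to $M(\Xset_s)$ having full row rank~$s$. In particular this forces $s\le N$, consistent with the remark made in the excerpt.

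Next, since $\mathrm{rank}\, M(\Xset_s)=s$, there exist column indices $j_1<\cdots<j_s$ such that the $s\times s$ minor
\[
D(\Xset_s):=\det\bigl[\phi_{j_k}(A_i)\bigr]_{i,k=1}^{s}
\]
is nonzero. The crucial point is that $D$ is a polynomial (hence continuous) function of the $2s$ coordinates of the nodes, because each $\phi_{j_k}$ is itself a polynomial in $(x,y)$.

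Since $D(\Xset_s)\ne 0$, continuity gives an $\epsilon>0$ such that whenever $\rho(A_i,A_i')<\epsilon$ for every $i$, we still have $D(\Xset_s')\ne 0$. Therefore the same $s$ columns of $M(\Xset_s')$ are linearly independent, $M(\Xset_s')$ has full row rank $s$, and by the equivalence established in the first step $\Xset_s'$ is $n$-independent. This completes the argument.

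There is essentially no serious obstacle: the only thing to be careful about is the equivalence between existence of fundamental polynomials for every node and surjectivity of the evaluation map $p\mapsto(p(A_1),\dots,p(A_s))$; once that is in place, the openness of $\{M:\mathrm{rank}\,M=s\}$ in the space of $s\times N$ matrices, combined with continuity of the map $\Xset_s\mapsto M(\Xset_s)$, immediately yields the lemma. (If one wished to extend the statement to directional-derivative nodes $A^{(k)}$ as in the generalization described earlier, one would simply replace the row $(\phi_j(A_i))_j$ by $(D_{\mathbf{a}}^{k}\phi_j(A))_j$; the continuity argument goes through verbatim.)
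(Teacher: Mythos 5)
Your proof is correct, and it is essentially the standard argument behind this lemma, which the paper does not prove itself but cites from [H, Remark 1.14]: $n$-independence is equivalent to the $s\times N$ evaluation (Vandermonde-type) matrix having full row rank $s$, and since some $s\times s$ minor is a polynomial, hence continuous, function of the node coordinates, its nonvanishing persists under small perturbations of the nodes. No gaps; the rank/surjectivity equivalence you flag as the one delicate point is exactly right and you justify it adequately.
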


Next result concerns the extensions of $n$-independent sets.
\begin{lem}[Lemma 2.1, \cite{HJZ}]\label{ext}
Any $n$-independent set ${\mathcal X}$ with $\#{\mathcal X}<N$ can be enlarged to an $n$-poised set.
\end{lem}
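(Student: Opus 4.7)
The plan is to show that an $n$-independent set $\mathcal X$ with $\#\mathcal X<N$ can always be enlarged by one node while preserving $n$-independence; then a finite iteration brings us to cardinality $N$, at which point the set is automatically $n$-poised.

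First I would exploit the linear-algebraic content of $n$-independence. The evaluation map $\mathrm{ev}_\Xset:\Pi_n\to\mathbb R^{\#\Xset}$, $p\mapsto (p(A))_{A\in\Xset}$, is surjective precisely because every $A\in\Xset$ admits an $n$-fundamental polynomial. Since $\#\Xset<N=\dim\Pi_n$, the kernel $V:=\{p\in\Pi_n:p|_\Xset=0\}$ has dimension $N-\#\Xset\ge 1$. Pick any nonzero $p\in V$. Its zero set is a plane algebraic curve, so there exist points of $\mathbb R^2$ off this curve; choose such a point $A$, i.e.\ a point with $p(A)\ne 0$. Automatically $A\notin\Xset$, and the polynomial $p/p(A)\in\Pi_n$ vanishes on $\Xset$ and takes value $1$ at $A$, hence is an $n$-fundamental polynomial for $A$ with respect to $\Xset\cup\{A\}$.

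Now I would invoke Lemma \ref{XA} to conclude that $\Xset\cup\{A\}$ is $n$-independent. Iterating this step at most $N-\#\Xset$ times produces an $n$-independent extension $\widetilde\Xset\supseteq\Xset$ with $\#\widetilde\Xset=N$. For such a set the evaluation map $\mathrm{ev}_{\widetilde\Xset}:\Pi_n\to\mathbb R^N$ is surjective by $n$-independence, and since source and target have equal dimension, it is in fact a bijection. This means every interpolation problem on $\widetilde\Xset$ has a unique solution in $\Pi_n$, i.e.\ $\widetilde\Xset$ is $n$-poised, as required.

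The argument is essentially routine once one recognizes the equivalence between $n$-independence and surjectivity of the evaluation map; the only point requiring a little care is the choice of the new node $A$, which is handled by the observation that a nonzero bivariate polynomial cannot vanish on all of $\mathbb R^2$. For the generalized setting with directional derivative nodes $A^{(k)}$ the same scheme applies: given a nonzero $p\in V$, the locus where $p$ together with all its directional derivatives up to some prescribed order vanishes is still a proper algebraic subset of the appropriate jet space, so one can again find an admissible new (possibly derivative) node at which the corresponding functional does not annihilate $p$, and then normalize.
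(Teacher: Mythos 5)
Your argument is correct and complete: surjectivity of the evaluation map under $n$-independence, choosing the new node off the zero set of a nonzero element of ${\mathcal P}_{n,{\mathcal X}}$, invoking Lemma \ref{XA}, and iterating until the cardinality reaches $N$ (where independence plus equal dimensions gives poisedness) is exactly the standard argument. The paper does not prove this lemma itself but imports it as Lemma 2.1 of \cite{HJZ}, and your proof is essentially the same construction used there, so there is nothing substantive to compare.
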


Denote the linear space of polynomials of total degree at most $n$
vanishing on ${\mathcal X}$ by
\begin{equation*}{{\mathcal P}}_{n,{\mathcal X}}=\left\{p\in \Pi_n
: p\big\vert_{\mathcal X}=0\right\}.
\end{equation*}
The following two propositions are well-known  (see, e.g., \cite{HJZ}).
\begin{proposition} \label{PnX}
\textit {For any node set ${\mathcal X}$ we have that
\begin{equation*}\label{eq:theta1} \dim {{\mathcal P}}_{n,{\mathcal X}} = N - \#{\mathcal Y},\end{equation*}
where ${\mathcal Y}$ is a maximal $n$-independent subset of ${\mathcal X}.$}
\end{proposition}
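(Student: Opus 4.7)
The plan is to exhibit a natural identification $\mathcal{P}_{n,\Xset}=\mathcal{P}_{n,\Yset}$ and then compute the dimension of $\mathcal{P}_{n,\Yset}$ directly from the definition of $n$-independence. This reduces the proposition to two claims: (a) a polynomial in $\Pi_n$ that vanishes on the maximal $n$-independent subset $\Yset$ automatically vanishes on all of $\Xset$; and (b) for an $n$-independent set the evaluation map $\Pi_n\to\mathbb{R}^{\#\Yset}$ is surjective.

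First I would note the trivial inclusion $\mathcal{P}_{n,\Xset}\subseteq\mathcal{P}_{n,\Yset}$, since $\Yset\subseteq\Xset$. For the opposite inclusion, take any $p\in\mathcal{P}_{n,\Yset}$ and suppose, for contradiction, that there is a node $A\in\Xset\setminus\Yset$ with $p(A)\neq 0$. Then $q:=p/p(A)\in\Pi_n$ satisfies $q(A)=1$ and $q|_{\Yset}=0$, so $q$ is an $n$-fundamental polynomial for $A$ relative to $\Yset\cup\{A\}$. Lemma \ref{XA} applied to the $n$-independent set $\Yset$ and the point $A$ then yields that $\Yset\cup\{A\}$ is $n$-independent. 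But this set is contained in $\Xset$ and strictly larger than $\Yset$, contradicting the maximality of $\Yset$. Therefore $p$ vanishes on every node of $\Xset$, giving $\mathcal{P}_{n,\Yset}\subseteq\mathcal{P}_{n,\Xset}$, hence equality.

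Next I would compute $\dim\mathcal{P}_{n,\Yset}$. Consider the linear evaluation map $E:\Pi_n\to\mathbb{R}^{\#\Yset}$ sending $p$ to the vector $(p(A))_{A\in\Yset}$. Its kernel is exactly $\mathcal{P}_{n,\Yset}$. Because $\Yset$ is $n$-independent, for each $A\in\Yset$ there is an $n$-fundamental polynomial $p^\star_A\in\Pi_n$, and $E(p^\star_A)$ is the standard basis vector $e_A\in\mathbb{R}^{\#\Yset}$. Thus $E$ is surjective, and the rank--nullity theorem gives
\[
\dim\mathcal{P}_{n,\Yset}=\dim\Pi_n-\#\Yset=N-\#\Yset.
\]
Combining with the identification in the previous paragraph yields $\dim\mathcal{P}_{n,\Xset}=N-\#\Yset$.

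The principal step is the second paragraph, where Lemma \ref{XA} is used to promote the existence of a polynomial vanishing on $\Yset$ but nonzero at some $A\in\Xset\setminus\Yset$ into the $n$-independence of $\Yset\cup\{A\}$; everything else is formal linear algebra. A subtle point worth checking, but not an obstacle, is that the argument goes through verbatim in the generalized setting with directional-derivative nodes $A^{(k)}$: for such a node one replaces $p(A)$ by $D_{\mathbf{a}}^kp(A)$ throughout, and the definitions given in the introduction ensure that $p\in\Pi_n$ with $p|_{\Yset}=0$ has all lower-order derivatives at $A$ vanishing, so the scaled polynomial is still an $n$-fundamental polynomial of the required form.
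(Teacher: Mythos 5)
Your proof is correct and follows the standard argument: the paper itself offers no proof of Proposition \ref{PnX}, citing it as well known from \cite{HJZ}, and your two steps --- identifying ${\mathcal P}_{n,{\mathcal X}}$ with ${\mathcal P}_{n,{\mathcal Y}}$ via the maximality of ${\mathcal Y}$ (with Lemma \ref{XA} supplying the $n$-independence of ${\mathcal Y}\cup\{A\}$), and then applying rank--nullity to the evaluation map, which is surjective because every node of the $n$-independent set ${\mathcal Y}$ has an $n$-fundamental polynomial --- are exactly the expected reasoning. Your closing remark about the directional-derivative nodes is also consistent with the paper's blanket statement that the quoted results persist in that generalized setting.
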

\begin{proposition}\label{maxline}
If a polynomial $p\in \Pi_n$ vanishes at $n+1$ points of  a line $\ell$, then we have that
$p = \ell r,$ where $r \in \Pi_{n-1}.$
\end{proposition}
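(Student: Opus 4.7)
The plan is to reduce to the case where $\ell$ is the $x$-axis by an invertible affine change of coordinates, and then exploit the one-variable factor theorem. Any two lines in the affine plane are affinely equivalent, so I can choose an invertible affine map $T$ of the plane sending the line $\{y=0\}$ onto $\ell$. Setting $\tilde p := p\circ T$, I obtain a polynomial in $\Pi_n$ (affine substitutions preserve total degree) that vanishes at the $n+1$ distinct points of the $x$-axis which are the $T$-preimages of the given points on $\ell$.

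Next I would restrict $\tilde p$ to the $x$-axis. The univariate polynomial $q(x):=\tilde p(x,0)$ has degree at most $n$ and possesses $n+1$ distinct roots, so by the factor theorem $q\equiv 0$. Expanding
\[
\tilde p(x,y)=\sum_{i+j\le n} a_{ij}\,x^i y^j,
\]
the identity $q\equiv 0$ forces $a_{i0}=0$ for every $i$. Every surviving monomial then carries a positive power of $y$, so one can write $\tilde p(x,y)=y\,\tilde r(x,y)$ with $\tilde r\in\Pi_{n-1}$.

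Finally I would transport this factorization back via $T^{-1}$. Since $T^{-1}(\ell)=\{y=0\}$, the affine function $X\mapsto (y\circ T^{-1})(X)$ vanishes on $\ell$ and is therefore a nonzero scalar multiple of $\ell$. Absorbing that scalar into the remaining factor yields $p=\ell\cdot r$ with $r\in\Pi_{n-1}$, as claimed.

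There is no genuine obstacle in this argument: the only facts that need verification are that an affine substitution preserves total degree and that the pullback of $\{y=0\}$ under $T^{-1}$ is a scalar multiple of $\ell$, both immediate from the linearity of $T$. The crux of the proof is the one-variable observation that a polynomial of degree at most $n$ with $n+1$ distinct roots must be identically zero.
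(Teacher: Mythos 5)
Your argument is correct: the reduction to the line $y=0$ by an invertible affine change of variables, the univariate factor-theorem step forcing $a_{i0}=0$, and the transport back (with $y\circ T^{-1}$ a nonzero scalar multiple of $\ell$) together give a complete proof. The paper itself offers no proof of this proposition --- it is quoted as well known with a reference to \cite{HJZ} --- and your normalization-plus-factor-theorem argument is exactly the standard one, so there is nothing to reconcile.
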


In the sequel we will need the following
\begin{proposition}[Prop. 1.10, \cite{HM}]\label{hm}
Let $\Xset$ be a set of nodes. Then the following two conditions are equivalent:

i)  ${{\mathcal P}}_{n,{\mathcal X}}=\{0\};$

ii) The node set $\Xset$ has an $n$-poised subset.
\end{proposition}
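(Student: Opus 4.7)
The proposition is a direct consequence of Proposition~\ref{PnX}, together with the observation that an $n$-independent set of cardinality $N$ is automatically $n$-poised. My plan is to prove the two implications separately.

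\textbf{The direction (ii) $\Rightarrow$ (i).} Suppose $\Xset$ contains an $n$-poised subset $\Yset$. Take any $p\in\mathcal{P}_{n,\Xset}$. Since $\Yset\subseteq\Xset$, the polynomial $p\in\Pi_n$ vanishes on $\Yset$. But $\Yset$ being $n$-poised means that the interpolation problem on $\Yset$ has a unique solution in $\Pi_n$ for every data vector; applied to the zero data, the unique solution is the zero polynomial, forcing $p=0$. Hence $\mathcal{P}_{n,\Xset}=\{0\}$.

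\textbf{The direction (i) $\Rightarrow$ (ii).} Let $\Yset$ be a maximal $n$-independent subset of $\Xset$. By Proposition~\ref{PnX},
\[
0=\dim\mathcal{P}_{n,\Xset}=N-\#\Yset,
\]
so $\#\Yset=N$. It remains to verify that an $n$-independent set $\Yset=\{A_1,\ldots,A_N\}$ of cardinality $N$ is necessarily $n$-poised. For each $A_i$ let $p_i^\star\in\Pi_n$ be its $n$-fundamental polynomial. The $p_i^\star$ are linearly independent (a standard fact already invoked in the paper), and there are $N=\dim\Pi_n$ of them, so $\{p_1^\star,\ldots,p_N^\star\}$ is a basis of $\Pi_n$. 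Given any data $(c_1,\ldots,c_N)$, the polynomial $q=\sum_{i=1}^N c_i p_i^\star$ satisfies $q(A_j)=c_j$ for every $j$, giving existence. For uniqueness, any $q\in\Pi_n$ with $q|_\Yset=0$ admits a unique expansion $q=\sum a_i p_i^\star$; evaluating at $A_j$ yields $a_j=0$ for all $j$, so $q=0$. Thus $\Yset$ is an $n$-poised subset of $\Xset$, proving (ii).

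\textbf{Where the content lies.} Both implications are short; the only nontrivial ingredient beyond definitions is Proposition~\ref{PnX}, which pins down $\dim\mathcal{P}_{n,\Xset}$ in terms of a maximal $n$-independent subset. Conceptually, the mildly delicate step is the lemma-style fact that $n$-independence plus maximal cardinality forces $n$-poisedness; it hinges on the dimension count $\dim\Pi_n=N$ together with the linear independence of fundamental polynomials, both of which are already on the table in the excerpt. I do not expect any serious obstacle.
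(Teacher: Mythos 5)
Your proof is correct; the paper itself does not reproduce an argument for this statement (it is quoted as Prop.\ 1.10 of \cite{HM}), and your route --- the implication (ii)$\Rightarrow$(i) via uniqueness for zero data, and (i)$\Rightarrow$(ii) via Proposition~\ref{PnX} together with the fact that an $n$-independent set of cardinality $N=\dim\Pi_n$ is $n$-poised because its fundamental polynomials form a basis of $\Pi_n$ --- is the standard one and uses exactly the ingredients already available in the paper.
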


Set $d(n, k) := N_n - N_{n-k} = (1/2) k(2n+3-k).$
The following is a  generalization of Proposition \ref{maxline}.
\begin{proposition}[Prop. 3.1, \cite{Raf}]\label{maxcurve}
Let $q$ be an algebraic curve of degree $k \le n$ without multiple components. Then the following hold:

$i)$ any subset of $q$ containing more than $d(n,k)$ nodes is
$n$-dependent;

$ii)$ any subset ${\mathcal X}$ of $q$ containing exactly $d=d(n,k)$ nodes is $n$-independent if and only if the following condition
holds:
\begin{equation}\label{p=qr}
p\in {\Pi_{n}}\quad \text{and} \quad p|_{\mathcal X} = 0 \Longrightarrow  p = qr,\ \hbox{where}\ r \in \Pi_{n-k}.
\end{equation}
\end{proposition}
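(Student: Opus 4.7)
\medskip
\noindent\textbf{Proof plan.} The whole proposition rests on a single dimension comparison: the obvious inclusion
\[
q\cdot\Pi_{n-k}\subseteq {\mathcal P}_{n,{\mathcal X}}
\]
(true because $q$ itself vanishes identically on the point set ${\mathcal X}\subset q$), combined with Proposition~\ref{PnX}. My plan is to first record that multiplication by $q$ is injective on $\Pi$ (the polynomial ring is a domain, and $q\ne 0$), so $\dim(q\cdot\Pi_{n-k})=N_{n-k}$, and therefore $\dim{\mathcal P}_{n,{\mathcal X}}\ge N_{n-k}$ for every ${\mathcal X}\subseteq q$. This single inequality drives both parts.

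For part (i), I would argue by contraposition. If ${\mathcal X}\subseteq q$ were $n$-independent, then ${\mathcal X}$ is its own maximal $n$-independent subset, so Proposition~\ref{PnX} yields $\dim{\mathcal P}_{n,{\mathcal X}}=N-\#{\mathcal X}$. Comparing with the universal lower bound just recorded gives
\[
\#{\mathcal X}\ \le\ N-N_{n-k}\ =\ d(n,k),
\]
so $\#{\mathcal X}>d(n,k)$ forces $n$-dependence.

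For part (ii), with $\#{\mathcal X}=d(n,k)$, I would handle the two directions symmetrically via dimension counting.
For $(\Leftarrow)$: the hypothesis says ${\mathcal P}_{n,{\mathcal X}}\subseteq q\cdot\Pi_{n-k}$, and combined with the reverse inclusion above this gives ${\mathcal P}_{n,{\mathcal X}}=q\cdot\Pi_{n-k}$, hence $\dim{\mathcal P}_{n,{\mathcal X}}=N_{n-k}=N-\#{\mathcal X}$; by Proposition~\ref{PnX} the maximal $n$-independent subset of ${\mathcal X}$ must then be all of ${\mathcal X}$, so ${\mathcal X}$ is $n$-independent.
For $(\Rightarrow)$: if ${\mathcal X}$ is $n$-independent, Proposition~\ref{PnX} again gives $\dim{\mathcal P}_{n,{\mathcal X}}=N-d(n,k)=N_{n-k}$, which matches the dimension of the subspace $q\cdot\Pi_{n-k}$; equality of finite-dimensional spaces with one contained in the other yields ${\mathcal P}_{n,{\mathcal X}}=q\cdot\Pi_{n-k}$, which is exactly condition~\eqref{p=qr}.

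The only delicate point, and the step I would scrutinize most carefully, is the injectivity of multiplication by $q$ on $\Pi_{n-k}$ — this is where the hypothesis ``without multiple components'' plays a role philosophically, although the injectivity actually follows from $q\ne 0$ alone. The ``no multiple components'' condition is what guarantees that the polynomial $q$ is determined (up to scalar) by the curve it cuts out, so that the conclusion $p=qr$ in \eqref{p=qr} is a statement about this specific reduced polynomial $q$ rather than about some ambient power of it; without that assumption one could replace $q$ by $q^2$ and the dimension count $N_{n-k}$ would no longer be correct. I do not anticipate any other obstacle: once the inclusion $q\cdot\Pi_{n-k}\subseteq{\mathcal P}_{n,{\mathcal X}}$ and its dimension are in hand, both parts reduce to mechanical applications of Proposition~\ref{PnX}.
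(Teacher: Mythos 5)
The paper itself offers no proof of this proposition --- it is quoted verbatim from Proposition 3.1 of \cite{Raf} --- so there is no internal argument to compare against; judged on its own, your proof is correct and complete: the inclusion $q\cdot\Pi_{n-k}\subseteq{\mathcal P}_{n,{\mathcal X}}$, the injectivity of multiplication by $q$, and Proposition \ref{PnX} do deliver part $i)$ and both directions of part $ii)$, and this dimension count is the standard route to the result in the cited literature. One small remark: as you yourself observe, your argument never uses the hypothesis that $q$ has no multiple components; that assumption is not needed for the equivalence as stated (with the given polynomial $q$ of degree $k$), but it becomes essential in the surrounding theory --- for instance in Proposition \ref{extcurve}, where one needs $d(n,k)$ $n$-independent nodes to actually exist on the curve, which fails for a non-reduced $q$ such as $\ell^2$.
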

Thus, according to Proposition~\ref{maxcurve}, $i)$, at most $d(n,k)$ nodes of $\mathcal X$ can lie in a curve $q$ of degree $k \le n$.
This motivates the following
\begin{defn}[Def. 3.1, \cite{Raf}]\label{def:maximal}
Given an $n$-independent set of nodes $\mathcal X_s$ with $s\ge d(n,k).$ A curve of degree $k \le n$ passing through $d(n,k)$ points
of $\mathcal X_s$ is called maximal.
\end{defn}

We say that a node $A$ of an $n$-poised set ${\mathcal X}$ \emph{uses} a curve $q\in\Pi_k,$ if the latter divides the $n$-fundamental polynomial of $A,$ i.e.,
$p^\star_{A} = q r,\ r \in \Pi_{n-k}.$

Let us bring a characterization of maximal curves:

\begin{proposition}[Prop. 3.3, \cite{Raf}] \label{maxcor}
Let a node set ${\mathcal X}$ be $n$-independent. Then
a curve $\mu$ of degree $k,\ k\le n,$ is a maximal curve if and only if
$$p\in\Pi_n, \ p|_{\Xset\cap\mu}=0 \implies p=\mu q, \ q\in\Pi_{n-k}.$$
\end{proposition}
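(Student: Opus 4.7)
The plan is to treat the two implications separately: the forward direction reduces to Proposition~\ref{maxcurve}, after first verifying that a maximal curve has no multiple components, and the backward direction reduces to a dimension count based on Proposition~\ref{PnX}.

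For the forward implication, suppose $\mu$ is a maximal curve of degree $k$; by Definition~\ref{def:maximal}, $\#(\Xset\cap\mu)\ge d(n,k)$. I would first observe that $\mu$ must have no multiple components. Indeed, if the associated reduced polynomial $\mu_0$ has degree $k_0<k$, then $\Xset\cap\mu=\Xset\cap\mu_0$ is $n$-independent (as a subset of $\Xset$) and lies on a reduced curve of degree $k_0$, so Proposition~\ref{maxcurve}(i) would force $\#(\Xset\cap\mu)\le d(n,k_0)<d(n,k)$, a contradiction, since $k\mapsto d(n,k)=\tfrac{1}{2}k(2n+3-k)$ is strictly increasing on $\{0,\ldots,n\}$. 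Thus $\mu$ is reduced, and Proposition~\ref{maxcurve}(i) yields $\#(\Xset\cap\mu)\le d(n,k)$; combined with the lower bound this gives $\#(\Xset\cap\mu)=d(n,k)$. An application of Proposition~\ref{maxcurve}(ii) to the $n$-independent set $\Xset\cap\mu$ then delivers precisely the divisibility \eqref{p=qr}.

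For the converse, I would set $\Yset:=\Xset\cap\mu$. The hypothesis says exactly that $\Pset_{n,\Yset}=\mu\cdot\Pi_{n-k}$: the $\subseteq$ inclusion is the hypothesis itself, and the $\supseteq$ inclusion is immediate because any product $\mu q$ with $q\in\Pi_{n-k}$ belongs to $\Pi_n$ and vanishes on $\mu$, hence on $\Yset$. Since $\mu\neq 0$, the multiplication map $q\mapsto\mu q$ from $\Pi_{n-k}$ to $\Pi_n$ is injective, so $\dim(\mu\cdot\Pi_{n-k})=N_{n-k}$. Meanwhile, $\Yset$ is $n$-independent as a subset of $\Xset$, so Proposition~\ref{PnX} gives $\dim\Pset_{n,\Yset}=N_n-\#\Yset$. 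Equating the two dimensions yields $\#\Yset=N_n-N_{n-k}=d(n,k)$, so $\mu$ is maximal by Definition~\ref{def:maximal}.

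The only subtle point I anticipate is the ``no multiple components'' observation in the forward direction, which is needed to legitimately apply Proposition~\ref{maxcurve}(ii); everything else is either a direct quotation of that proposition or a routine dimension count using Proposition~\ref{PnX}.
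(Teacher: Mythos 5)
The paper itself gives no proof of this proposition --- it is quoted verbatim from Rafayelyan \cite{Raf}, Prop.~3.3 --- so there is no in-paper argument to compare yours against; judged on its own, your derivation from the paper's other quoted results is correct. The forward direction is handled properly: ruling out multiple components via the strict monotonicity of $k\mapsto d(n,k)$ (so that $\#(\mathcal X\cap\mu)$ would be capped at $d(n,k_0)<d(n,k)$ for the reduced curve) is exactly the point that must be checked before Proposition~\ref{maxcurve}(ii) may be invoked, and your converse is a clean dimension count: $\mathcal P_{n,\mathcal X\cap\mu}=\mu\cdot\Pi_{n-k}$ has dimension $N_{n-k}$, while $\mathcal X\cap\mu$, being a subset of the $n$-independent set $\mathcal X$, is itself $n$-independent and hence its own maximal $n$-independent subset, so Proposition~\ref{PnX} forces $\#(\mathcal X\cap\mu)=N_n-N_{n-k}=d(n,k)$, which is Definition~\ref{def:maximal}.
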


Next result concerns  maximal independent sets in curves.
\begin{proposition}[Prop. 3.5, \cite{HakTor}] \label{extcurve}
Assume that $\sigma$ is an algebraic curve  of degree $k$ without multiple components and ${\mathcal X}_s\subset \sigma$ is any $n$-independent node set of cardinality $s,\ s<d(n,k).$ Then the set ${\mathcal X}_s$ can be extended to a maximal $n$-independent set ${\mathcal X}_{d}\subset \sigma$ of cardinality $d=d(n,k)$.
\end{proposition}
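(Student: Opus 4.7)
The plan is to add nodes to $\mathcal{X}_s$ one at a time, keeping them on $\sigma$ and preserving $n$-independence, until the cardinality reaches $d=d(n,k)$. Thus it suffices to establish the inductive step: if $\mathcal{X}\subset\sigma$ is $n$-independent with $\#\mathcal{X}<d(n,k)$, then there exists a point $A\in\sigma\setminus\mathcal{X}$ such that $\mathcal{X}\cup\{A\}$ is again $n$-independent.

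By Lemma \ref{XA}, it is enough to exhibit an $A\in\sigma\setminus\mathcal{X}$ that admits an $n$-fundamental polynomial with respect to $\mathcal{X}\cup\{A\}$; equivalently, a polynomial $p\in\mathcal{P}_{n,\mathcal{X}}$ with $p(A)\neq 0$. Proposition \ref{PnX} gives
\[
\dim\mathcal{P}_{n,\mathcal{X}}=N-\#\mathcal{X}>N-d(n,k)=N_{n-k}.
\]
On the other hand, the subspace $\sigma\cdot\Pi_{n-k}:=\{\sigma r:r\in\Pi_{n-k}\}$ is contained in $\mathcal{P}_{n,\mathcal{X}}$ (since $\sigma$ vanishes on $\mathcal{X}$) and has dimension exactly $N_{n-k}$, because multiplication by the nonzero polynomial $\sigma$ is injective on $\Pi_{n-k}$. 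The containment is therefore strict, so I may pick a polynomial $p\in\mathcal{P}_{n,\mathcal{X}}$ that is \emph{not} divisible by $\sigma$.

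It remains to find a point $A\in\sigma\setminus\mathcal{X}$ with $p(A)\neq 0$. Decompose $\sigma=\sigma_1\sigma_2\cdots\sigma_m$ into pairwise distinct irreducible factors, which is possible because $\sigma$ has no multiple components. Since $\sigma\nmid p$, unique factorization forces $\sigma_j\nmid p$ for some $j$. The irreducible factor $\sigma_j$ has degree at most $k$, so by Bezout's theorem the polynomial $p$ and the curve $\sigma_j$ share only finitely many points. Hence $\sigma_j\setminus(\mathcal{X}\cup\{p=0\})$ is a cofinite subset of the infinite curve $\sigma_j$; any $A$ chosen from it satisfies $A\in\sigma$, $A\notin\mathcal{X}$, and $p(A)\neq 0$. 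Then $p/p(A)$ is an $n$-fundamental polynomial for $A$ with respect to $\mathcal{X}\cup\{A\}$, so Lemma \ref{XA} yields the $n$-independence of $\mathcal{X}\cup\{A\}$ and closes the induction.

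The main obstacle --- and the only place where genuine algebraic geometry enters --- is the final paragraph, where we promote ``$\sigma\nmid p$'' into the existence of a point of $\sigma$ off $\{p=0\}$; here the ``no multiple components'' hypothesis is essential, for otherwise an irreducible factor of $\sigma$ could occur to a higher power and could divide $p$ ``partially'' in a way that is not directly controlled by Bezout. The remainder of the argument is pure linear algebra (the dimension comparison in $\mathcal{P}_{n,\mathcal{X}}$) together with the black-box Lemma \ref{XA}.
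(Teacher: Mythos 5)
The paper does not actually prove this proposition---it is imported verbatim from \cite{HakTor}---but your argument is the standard one for this statement (one-node-at-a-time induction via Lemma \ref{XA}, the dimension count $\dim\mathcal{P}_{n,\mathcal{X}}=N-\#\mathcal{X}>N_{n-k}=\dim\{\sigma r: r\in\Pi_{n-k}\}$ from Proposition \ref{PnX}, then a point of $\sigma$ off the zero set of the chosen $p$), and it is correct in the intended setting. The one step to flag is the phrase ``the infinite curve $\sigma_j$'': over the reals an irreducible factor may have a finite or even empty real zero set (e.g.\ $x^2+y^2$ or $x^2+y^2+1$), in which case your final paragraph has no point to pick---but for such factors the proposition itself fails as literally stated (a line times $x^2+y^2+1$ is a ``cubic'' whose real locus carries at most $n+1$ $n$-independent nodes, far fewer than $d(n,3)=3n$), so this is an implicit standing assumption of this literature (every component of $\sigma$ has infinitely many real points, so that vanishing of $p$ on $\sigma_j$ forces $\sigma_j\mid p$, exactly as in your Bezout step) rather than a defect specific to your proof.
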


Next result from Algebraic Geometry will be used in the sequel:
\begin{thm}[Th.~2.2, \cite{W}] \label{intn} If $\Cset$ is a curve of degree $n$ with no multiple components, then through any point $O$ not in $\Cset$ there pass lines which intersect $\Cset$ in $n$ distinct points.
\end{thm}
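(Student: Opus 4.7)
The plan is to parametrize the lines through $O$ by a projective line in the dual plane and show that only finitely many of them fail to meet $\Cset$ in $n$ distinct points; since there are infinitely many lines through $O$, this yields the claim with lines to spare.

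First I would set up the intersection count. Because $O\notin\Cset$, no line $\ell$ through $O$ can be a component of $\Cset$, so the restriction of the defining polynomial $F\in\Pi_n$ of $\Cset$ to a parametrization $t\mapsto O+t\mathbf{v}$ of $\ell$ is a nonzero polynomial $p_{\mathbf{v}}(t)=F(O+t\mathbf{v})$ of degree $\le n$. The coefficient of $t^n$ in $p_{\mathbf{v}}$ is the degree-$n$ homogeneous part of $F$ evaluated at $\mathbf{v}$, so it is nonzero for all but at most $n$ directions. For those directions $p_{\mathbf{v}}$ has degree exactly $n$, and its roots account for the $n$ intersections of $\ell$ with $\Cset$ counted with multiplicity. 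It then suffices to ensure no repeated roots, i.e., to exclude lines meeting $\Cset$ with intersection multiplicity $\ge 2$ at some point.

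A repeated root at $P\in\ell\cap\Cset$ occurs precisely when either $P$ is a singular point of $\Cset$, or $P$ is a smooth point and $\ell$ coincides with the tangent to $\Cset$ at $P$. Singular points are finite in number: since $\Cset$ has no multiple components, $F$ shares no common irreducible factor with $F_x$, and Bezout applied to $F$ and $F_x$ bounds the common zero set (which contains the singular locus $\{F=F_x=F_y=0\}$) by $n(n-1)$. So only finitely many lines through $O$ pass through a singular point. For tangent lines from $O$ at smooth points, I would pass to the dual curve $\Cset^\star\subset(\mathbb{P}^2)^\star$, which is again an algebraic curve of finite degree; the pencil of lines through $O$ corresponds to a line $L_O$ in the dual plane. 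If $L_O$ were a component of $\Cset^\star$, then a one-parameter family of tangent lines of $\Cset$ would be concurrent at $O$, which forces some component of $\Cset$ to be a line through $O$ — contradicting $O\notin\Cset$. Hence $L_O\cap\Cset^\star$ is finite, and only finitely many tangent lines to $\Cset$ pass through $O$.

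Combining the three exceptional sets (vanishing leading coefficient, singularity, tangency) yields only finitely many bad directions in the infinite pencil of lines through $O$, so all but finitely many lines through $O$ meet $\Cset$ in $n$ distinct points. The main obstacle I anticipate is the tangency step: the argument that $L_O$ is not a component of $\Cset^\star$ rests on the geometric fact that a non-linear component of $\Cset$ cannot have all its tangents concurrent at one point, and it is precisely here that the hypothesis $O\notin\Cset$ is used decisively. One could alternatively formulate this step via the discriminant $\Delta(\mathbf{v})=\mathrm{Disc}_t\,p_{\mathbf{v}}(t)$, a polynomial in $\mathbf{v}$ whose non-vanishing on an open set would then need to be established by the same geometric consideration.
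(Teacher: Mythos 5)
The paper does not prove Theorem \ref{intn} at all: it is imported verbatim from Walker's book \cite{W}, so there is no internal argument to measure yours against, and I judge the proposal on its own. Your outline is the standard one and is essentially sound, with the proviso that it lives over the complex (projective) plane, which is indeed the setting of Walker's theorem: the step ``its roots account for the $n$ intersections counted with multiplicity'' needs algebraic closedness, and over the reals the literal statement about zero sets can fail (the zero set of $(x^2+y^2-1)(x^2+y^2+1)$, a reduced quartic, is just the unit circle, which no real line meets in four points). Two local repairs are needed. First, ``no multiple components $\Rightarrow F$ and $F_x$ have no common irreducible factor'' is not quite true: a factor of $F$ not involving $x$ (a line $y=c$) also divides $F_x$; use $F_x$ and $F_y$ jointly (the singular locus of a reduced curve is finite), or rotate coordinates first. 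Second, the dual-curve step is where the real content sits, and there you assert rather than prove the key classical fact that infinitely many tangents of $\Cset$ concurrent at $O$ force a line component through $O$ (this is biduality, valid in characteristic $0$); moreover, line components of $\Cset$ have a point, not a curve, as ``dual'', so they need a separate (trivial) remark. You do correctly identify this as the crux and as the place where $O\notin\Cset$ enters.

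A more elementary route for the tangency-and-singularity count, closer to the classical treatment, is the first polar of $O$: the points $P\in\Cset$ at which some line through $O$ meets $\Cset$ with intersection multiplicity $\ge 2$ are exactly the points of $\Cset\cap P_O\Cset$, where $P_O\Cset$ has degree $n-1$. An irreducible factor $q$ of $F$ divides $P_OF$ only if $P_Oq\equiv 0$, which by Euler's relation forces $q$ to be a line through $O$, excluded by $O\notin\Cset$; so for reduced $\Cset$ the intersection is finite and Bezout bounds the bad points by $n(n-1)$, hence the bad lines through $O$ by $n(n-1)$, in addition to the at most $n$ directions annihilating the leading coefficient. This avoids dual curves and the separate singular-locus discussion altogether. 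Finally, your closing discriminant remark, once the nonvanishing of $\mathrm{Disc}_t\,p_{\mathbf v}(t)$ for some $\mathbf v$ is secured, also delivers the ``Moreover'' observation the paper appends after the theorem (any line through $O$ sufficiently close to a good line is again good), which is a worthwhile bonus of your formulation.
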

\noindent Let us mention that, as it follows from the proof, if a line $\ell$ through the point $O$ intersects $\Cset$ in $n$ distinct points then any line through $O,$ sufficiently close to $\ell,$ has the same property.

Finally, let us present a well-known
\begin{lem} \label{2cor}
Given $m$ linearly independent polynomials, $m\ge 2.$
Then for any point $A$ there are $m-1$ linearly independent polynomials, in their linear span, vanishing at $A.$
\end{lem}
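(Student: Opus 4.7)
The plan is to treat this as a pure linear-algebra statement: evaluation at a fixed point is a linear functional on the span, and the kernel of any linear functional on an $m$-dimensional space has dimension at least $m-1$.

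More concretely, let $p_1,\ldots,p_m$ be the given linearly independent polynomials, and let $V=\operatorname{span}\{p_1,\ldots,p_m\}$, so $\dim V = m$. First I would define the evaluation map $\phi\colon V\to \mathbb{R}$ by $\phi(p)=p(A)$; this is obviously linear. By the rank-nullity theorem, $\dim\ker\phi \ge m-\dim\operatorname{Im}\phi \ge m-1$. Choosing any $m-1$ linearly independent elements of $\ker\phi$ (which exist because $\dim\ker\phi\ge m-1$) gives $m-1$ linearly independent polynomials in $V$ vanishing at $A$, which is exactly what is required.

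If a constructive version is preferred, I would split on whether all $p_i$ vanish at $A$. If $p_i(A)=0$ for all $i$, then $p_1,\ldots,p_{m-1}$ already do the job. Otherwise, after relabeling, assume $p_1(A)\ne 0$, and set
\[
q_i := p_i - \frac{p_i(A)}{p_1(A)}\,p_1, \qquad i=2,\ldots,m.
\]
Each $q_i$ lies in $V$ and satisfies $q_i(A)=0$. Linear independence of $q_2,\ldots,q_m$ follows from the linear independence of $p_1,\ldots,p_m$: any relation $\sum_{i=2}^m c_i q_i = 0$ rewrites as a relation on $p_1,\ldots,p_m$ whose coefficients of $p_2,\ldots,p_m$ are $c_2,\ldots,c_m$, forcing all $c_i=0$.

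There is no real obstacle here; this is a standard fact included for later reference, and both arguments are a few lines. The rank-nullity version is the cleanest and is what I would write in the paper.
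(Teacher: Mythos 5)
Your proof is correct: the paper states this lemma without proof as a well-known fact, and your rank--nullity argument (evaluation at $A$ is a linear functional on the $m$-dimensional span, so its kernel has dimension at least $m-1$) is exactly the standard justification one would supply; the constructive variant with $q_i := p_i - \frac{p_i(A)}{p_1(A)}p_1$ is also fine. Nothing is missing.
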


\section {A result and its complement}
In this paper we complement the following
\begin{thm}[Thm. 2.5, \cite{HKV}]\label{hkv}
Assume that ${\mathcal X}$ is an $n$-independent set of $d(n, k-2)+3$ nodes with $3\le k\le n-2.$ Then at most three linearly independent curves of degree
$\le k$ may pass through all the nodes of ${\mathcal X}.$ Moreover, there are such three curves for the set ${\mathcal X}$ if and only if all the nodes of ${\mathcal X}$  lie in a  curve of degree $k-1,$ or all the nodes of  ${\mathcal X}$ but three lie in a (maximal) curve of degree $k-2.$
\end{thm}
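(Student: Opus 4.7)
My plan is to prove the two directions separately. The sufficiency direction---that either structural condition yields three linearly independent degree-$k$ curves through $\mathcal X$---is the easier half and reduces to two elementary dimension counts. If $\mathcal X\subset\sigma$ with $\deg\sigma=k-1$, then $\ell\mapsto\sigma\ell$ is an injective linear map $\Pi_1\hookrightarrow\mathcal P_{k,\mathcal X}$, producing three linearly independent curves. If instead $\mathcal X\setminus\{A_1,A_2,A_3\}$ lies on a maximal curve $\sigma$ of degree $k-2$, then three distinct points impose independent conditions on conics, so the subspace of $\Pi_2$ vanishing at $A_1,A_2,A_3$ has dimension $3$; multiplication by $\sigma$ embeds this subspace into $\mathcal P_{k,\mathcal X}$.

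For necessity, assume $q_1,q_2,q_3\in\mathcal P_{k,\mathcal X}$ are linearly independent. I would set $\sigma:=\gcd(q_1,q_2)$ (replacing by its reduction to kill multiple components) and write $q_i=\sigma r_i$. By B\'ezout, $q_1$ and $q_2$ share at most $(k-\deg\sigma)^2$ isolated intersections outside $\sigma$, so at least $d(n,k-2)+3-(k-\deg\sigma)^2$ nodes of $\mathcal X$ lie on $\sigma$. Combining this with Proposition~\ref{maxcurve}(i), which bounds $|\mathcal X\cap\sigma|\le d(n,\deg\sigma)$, a short arithmetic case analysis confines $\deg\sigma$ to $\{k-1,k-2\}$: the value $\deg\sigma=k$ would collapse $q_2$ onto $q_1$, while smaller values violate the numerical inequality in the range $k\le n-2$. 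The case $\deg\sigma=k-1$ will correspond to structure~(i), and $\deg\sigma=k-2$ to structure~(ii). In each case $\mathcal X\cap\sigma$ sits near the maximal size on $\sigma$, and using Proposition~\ref{extcurve} to extend it to a maximal $n$-independent set on $\sigma$ together with Proposition~\ref{maxcor}, I expect to deduce that $\sigma$ divides every element of $\mathcal P_{k,\mathcal X}$. Granted this, any $p\in\mathcal P_{k,\mathcal X}$ factors as $p=\sigma\tilde p$ with $\tilde p\in\Pi_1$ in case~(i) or $\tilde p\in\Pi_2$ vanishing at the three off-$\sigma$ nodes in case~(ii), and Lemma~\ref{2cor} applied to $\{q_1,q_2,q_3\}$ helps identify the three exceptional nodes in case~(ii).

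The most delicate step will be upgrading ``$\sigma$ divides $q_1$ and $q_2$'' to ``$\sigma$ divides \emph{every} $p\in\mathcal P_{k,\mathcal X}$.'' B\'ezout alone does not suffice when $k$ is close to $n$, since then the number of $\mathcal X$-nodes lying on $\sigma$ need not exceed $k\cdot\deg\sigma$, and one cannot force divisibility just by counting. Here the $n$-independence hypothesis must enter in an essential way, presumably through a fundamental-polynomial argument that constructs, for each point of $\mathcal X\cap\sigma$, an $n$-fundamental polynomial with prescribed behavior and then combines these to force the divisibility on $\Pi_k$. Closing this gap---and simultaneously verifying in case~(ii) that exactly three (and not fewer, and not more) nodes of $\mathcal X$ lie off $\sigma$---is where I expect the bulk of the technical work to lie.
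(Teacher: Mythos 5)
Your sufficiency direction is fine and is the standard dimension count. The necessity direction, however, breaks at its very first substantive step, and it breaks precisely at the hard end of the range $3\le k\le n-2$. You propose to set $\sigma=\gcd(q_1,q_2)$ and to confine $\deg\sigma$ to $\{k-1,k-2\}$ by playing the B\'ezout bound $(k-\deg\sigma)^2$ for the nodes off $\sigma$ against the bound $\#(\mathcal X\cap\sigma)\le d(n,\deg\sigma)$ from Proposition~\ref{maxcurve}. This arithmetic does not close when $n-k$ is small. Take $k=n-2$, $n=10$: then $\#\mathcal X=d(10,6)+3=54$ while $k^2=64$, so two degree-$k$ curves through $\mathcal X$ need not share any component on counting grounds at all; and even granting a common component, $\deg\sigma=k-3=5$ would force only $54-9=45$ nodes onto $\sigma$, which equals $d(10,5)=45$, so there is no contradiction, and the same happens for every smaller $\deg\sigma$ in this example. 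Your claim that smaller values of $\deg\sigma$ ``violate the numerical inequality in the range $k\le n-2$'' is in fact backwards: the counting works when $n$ is large relative to $k$ and fails as $k$ approaches $n-2$. So $n$-independence must be injected already at the stage of producing the common component, not only at the later ``upgrading'' stage you flag.

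Beyond that, several parts of the statement are not delivered even in outline. The step you yourself call the most delicate --- that $\sigma$ divides every member of $\mathcal P_{k,\mathcal X}$, that in the case $\deg\sigma=k-2$ exactly three nodes lie off $\sigma$ and $\sigma$ is maximal, and that in the case $\deg\sigma=k-1$ the one node your count allows off $\sigma$ is nevertheless absorbed into a degree-$(k-1)$ curve through all of $\mathcal X$ --- is left at the level of ``presumably a fundamental-polynomial argument''; that is exactly where the content of the theorem lies. Moreover the first assertion, $\dim\mathcal P_{k,\mathcal X}\le 3$, is never addressed: in case (i) the set $\mathcal X$ is not maximal on the degree-$(k-1)$ curve when $k\le n-2$, so the upper bound does not follow from your factorization picture. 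For comparison: the present paper does not reprove Theorem~\ref{hkv} (it is quoted from \cite{HKV}); in its proof of the complementary case $k=n-1$ (Proposition~\ref{hh}) the bound ``at most three'' is extracted by contradiction from Theorem~\ref{hk}, and the structural dichotomy is obtained not by gcd/B\'ezout counting but by adjoining auxiliary nodes $B_1,B_2$ on a carefully chosen line $\ell_{12}$ and exploiting maximal-curve factorization (Propositions~\ref{maxcor} and \ref{extcurve}). Some such independence-based mechanism, rather than intersection counting, is what your outline is missing.
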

\noindent Namely, we prove that the above result is true also in the case $k=n-1:$ 
\begin{proposition}\label{hh}
Assume that ${\mathcal X}$ is an $n$-independent set of $d(n, n-3)+3$ nodes, $n\ge 4.$ Then at most three linearly independent curves of degree
$\le n-1$ may pass through all the nodes of ${\mathcal X}.$ Moreover, there are such three curves for the set ${\mathcal X}$ if and only if all the nodes of ${\mathcal X}$  lie in a  curve of degree $n-2,$ or all the nodes of  ${\mathcal X}$ but three lie in a (maximal) curve of degree $n-3.$
\end{proposition}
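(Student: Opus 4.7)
My plan is to verify the easy ``if'' direction first and then establish the converse together with the upper bound $\dim\mathcal P_{n-1,\mathcal X}\le 3$. For the ``if'' direction: if $\mathcal X\subset\mu$ with $\deg\mu=n-2$, then the three polynomials $\mu$, $x\mu$, $y\mu$ are linearly independent members of $\mathcal P_{n-1,\mathcal X}$; if $\mathcal X\setminus\mu=\{A_1,A_2,A_3\}$ with $\mu$ a maximal curve of degree $n-3$, then the three-dimensional subspace of $\Pi_2$ consisting of polynomials vanishing at $A_1,A_2,A_3$ produces three linearly independent polynomials $\mu r\in\mathcal P_{n-1,\mathcal X}$.

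For the converse, assume linearly independent $p_1,p_2,p_3\in\mathcal P_{n-1,\mathcal X}$ are given. The strategy is to produce a common component $\mu$ of the $p_i$'s whose degree must be exactly $n-2$ or $n-3$, and then use this $\mu$ to identify the structural case and establish the upper bound. A first observation: since $\mathcal X$ is $n$-independent of cardinality $N_n-7$, we have $\dim\mathcal P_{n,\mathcal X}=7$, so the map $\Pi_1\otimes\mathrm{span}(p_1,p_2,p_3)\to\mathcal P_{n,\mathcal X}$, $\ell\otimes p\mapsto\ell p$, has a source of dimension $9$ and a target of dimension $7$, forcing at least a two-dimensional space of linear syzygies $\sum_i\ell_ip_i=0$. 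Combining this with Bezout bounds on the pairwise intersections $p_i\cap p_j$ (each containing all $N_n-7$ nodes of $\mathcal X$), an AF+BG style argument on $p_3$ modulo $(p_1,p_2)$, and the maximal-curve machinery of Propositions~\ref{maxcurve} and~\ref{maxcor} applied to candidate maximal curves through subsets of $\mathcal X$, I would extract a common irreducible component $\mu$ of the $p_i$.

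Having identified $\mu$, write $p_i=\mu q_i$ with $q_i\in\Pi_{n-1-\deg\mu}$ having no common factor. The nodes of $\mathcal X\setminus\mu$ lie in the finite set $q_1\cap q_2\cap q_3$. The bounds $\#(\mathcal X\cap\mu)\le d(n,\deg\mu)$ from Proposition~\ref{maxcurve} and $\#(\mathcal X\setminus\mu)\le(n-1-\deg\mu)^2$ from Bezout, together with $\#\mathcal X=N_n-7$, pin down $\deg\mu\in\{n-3,n-2\}$: degree $n-2$ gives case (a) (all of $\mathcal X$ on $\mu$), while degree $n-3$ gives case (b) (exactly three nodes of $\mathcal X$ off a maximal curve $\mu$). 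To conclude that $\dim\mathcal P_{n-1,\mathcal X}=3$ (and in particular is at most $3$), I would show that every $p\in\mathcal P_{n-1,\mathcal X}$ factors as $\mu\cdot r$: in case (a), extend $\mathcal X\subset\mu$ to a maximal $n$-independent subset of $\mu$ via Proposition~\ref{extcurve}, pick a generic line $\ell$ through the extra point that does not divide $\mu$, apply Proposition~\ref{maxcor} to obtain $\ell p=\mu q$ with $q\in\Pi_2$, and divide by $\ell$ to obtain $p=\mu r$ with $r\in\Pi_1$; case (b) is analogous, with $r\in\Pi_2$ vanishing at the three extra nodes.

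The main obstacle is extracting the common component $\mu$ cleanly from the forced linear syzygies. The syzygy module of three polynomials of degree $n-1\ge 3$ in two variables is delicate: pairwise coprimality does not automatically preclude linear syzygies, so the naive dimension count above does not directly yield $\mu$. The argument must exploit the very specific rigidity of the $n$-independent set $\mathcal X$ -- through Lemmas~\ref{XA}, \ref{eps'}, \ref{2cor} and, if necessary, an iterative application of Proposition~\ref{maxline} to peel off lines containing many nodes of $\mathcal X$ -- in order to rule out sporadic syzygy configurations and force the claimed structural dichotomy.
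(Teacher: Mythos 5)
Your ``if'' direction is fine and matches the paper, and your factorization argument via Propositions \ref{extcurve} and \ref{maxcor} for computing $\dim{\mathcal P}_{n-1,{\mathcal X}}$ in the two structural cases is sound. The genuine gap is that the heart of the ``only if'' direction is never proved: the step in which the three linearly independent curves are shown to share a common divisor of the right degree is exactly the difficult content of Proposition \ref{hh}, and you only gesture at it (``Bezout bounds'', ``an AF+BG style argument'', ``maximal-curve machinery'') before conceding in your final paragraph that the forced linear syzygies do not by themselves yield $\mu$. Without that step there is no proof. Moreover, the one quantitative claim you do make at this point is incorrect: writing $m=\deg\mu$ and $t=n-m$, your inequality $\#{\mathcal X}\le d(n,m)+(n-1-m)^2$ reduces to $(t-1)^2\ge \frac{(t+1)(t+2)}{2}-7$, i.e.\ $t^2-7t+14\ge 0$, which holds for \emph{every} $t$; so this counting pins down nothing about $\deg\mu$ --- for instance a common component that is a single line is perfectly consistent with it. (In addition, the bound $\#({\mathcal X}\setminus\mu)\le(n-1-m)^2$ presumes two of the cofactors $q_i$ are coprime, which ``no common factor of all three'' does not guarantee; and even if $\deg\mu=n-2$ were forced, that alone gives all nodes but at most one on $\mu$, not all of ${\mathcal X}$ on a curve of degree $n-2$, so a further argument is needed there as well.)

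For comparison, the paper obtains the upper bound at once from Theorem \ref{hk} (four independent curves through $d(n,n-3)+2$ of the nodes force a maximal curve of degree $n-3$ through all but three nodes of ${\mathcal X}$, whence $\dim{\mathcal P}_{n-1,{\mathcal X}}=3$, a contradiction), while your plan to deduce the bound from the structural dichotomy is logically admissible but again rests entirely on the missing core. For the ``only if'' part the paper first disposes of the case where some $\sigma_i$ has degree $\le n-2$ and of multiple components, then chooses two auxiliary nodes $B_1,B_2\notin{\mathcal X}$ (via Lemmas \ref{ext}, \ref{eps'} and Theorem \ref{intn}) so that ${\mathcal X}\cup\{B_1,B_2\}$ is $n$-independent and the line $\ell_{12}$ misses ${\mathcal X}$ and meets the $\sigma_i$, and any two of their distinct components, in $n-1$ distinct points. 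Lemma \ref{2sigma} and a multiplicity analysis along $\ell_{12}$ (using directional-derivative nodes and Assumptions 1--2) then show that the $n-3$ intersection points of $\ell_{12}$ with a suitable combination $s$, other than $B_1,B_2$, are common, with multiplicities, to $\sigma_1,\sigma_2,\sigma_3$; only then does condition $iv)$ convert this into a common divisor $q\in\Pi_{n-3}$, and the analysis of the quadratic cofactors $\beta_i$ yields the dichotomy. An argument of this kind is what your proposal would have to supply at the place you left open.
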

In the sequel we will use the following
\begin{thm}[Th.~3,  \cite{HK}]\label{hk}
Assume that ${\mathcal X}$ is an $n$-independent set of $d(n, k-2)+2$ nodes with $3\le k\le n-1.$ Then at most four linearly independent curves of degree
$\le k$ may pass through all the nodes of ${\mathcal X}.$ Moreover, there are such four curves for the set ${\mathcal X}$ if and only if all the nodes of ${\mathcal X}$ but two lie in a maximal curve of degree $k-2.$
\end{thm}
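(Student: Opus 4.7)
My plan is to use Theorem \ref{hk} as the main ingredient, combined with an analysis of the greatest common divisor of a basis of $V := \Pset_{n-1,\Xset}$; for brevity, write $V'_B := \Pset_{n-1,\Xset\setminus\{B\}}$ for $B\in\Xset$. The ``at most three curves'' part goes by contradiction. Suppose $\dim V\ge 4$ and fix any $B\in\Xset$. The $n$-independent set $\Xset\setminus\{B\}$ has exactly $d(n,n-3)+2$ nodes and $\dim V'_B\ge\dim V\ge 4$, so Theorem \ref{hk} with $k=n-1$ forces equality and delivers a maximal curve $\mu$ of degree $n-3$ containing all but two nodes of $\Xset\setminus\{B\}$. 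If $B\in\mu$, then $\mu$ contains $d(n,n-3)+1$ nodes of the $n$-independent set $\Xset$, violating Proposition \ref{maxcurve}(i). Otherwise $B\notin\mu$, and Proposition \ref{maxcor} forces every element of $V$ to be of the form $\mu\cdot r$ with $r\in\Pi_2$ vanishing at the three off-$\mu$ nodes of $\Xset$; since three distinct points impose three independent conditions on $\Pi_2$, this gives $\dim V\le 3$, a contradiction.

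The ``if'' direction of the characterization is a direct dimension count. For the ``only if'' direction, assume $\dim V=3$ and split on whether some $B\in\Xset$ has $\dim V'_B=4$. In that case the same argument produces a maximal $\mu$ of degree $n-3$ with $B\notin\mu$, whence $\Xset$ has $d(n,n-3)$ nodes on $\mu$ and three off — the second alternative of the characterization. It remains to treat the subcase where $\dim V'_B=3$ for every $B\in\Xset$ (equivalently, no node of $\Xset$ has an $(n-1)$-fundamental polynomial relative to $\Xset$); here I must show that all nodes of $\Xset$ lie on a curve of degree $n-2$. Let $f$ be the greatest common divisor of a basis of $V$, of degree $m$; $\dim V=3$ excludes $m=n-1$, so $m\in\{0,1,\ldots,n-2\}$. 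If $m=n-2$, then $V=f\cdot\Pi_1$ and choosing $\ell\in\Pi_1$ with no zero on $\Xset$ shows $f$ itself vanishes on $\Xset$, giving the first alternative.

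The remaining task is to rule out $m\le n-3$ under the current hypothesis. For $m=n-3$, write $c_i=fg_i$ with $g_i\in\Pi_2$ and $\gcd(g_1,g_2,g_3)=1$; then $\Xset_2:=\Xset\setminus f$ lies in the common zero locus of the $g_i$. If $\#\Xset_2=3$, we are in the second alternative of the characterization, which puts some $B$ off $f$ into the previous subcase with $\dim V'_B=4$, contradicting the current case. If $\#\Xset_2\ge 4$, a short base-locus analysis shows that any set of at least four points imposing at most three conditions on $\Pi_2$ must be collinear; Proposition \ref{maxline} applied with $n=2$ then extracts a common linear factor of the $g_i$, contradicting $\gcd(g_i)=1$. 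The subcases $m\le n-4$ (including $m=0$) are treated by the same philosophy: with $\deg g_i=n-1-m\ge 3$ one combines Bezout bounds on the pairwise intersections $g_i\cap g_j$ with iterated applications of Theorem \ref{hk} to suitable subsets of $\Xset$, ultimately forcing a nontrivial common factor of the $g_i$ and again contradicting $\gcd(g_i)=1$.

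I expect these last subcases to be the main obstacle. For $n=4$ the no-common-factor case is killed immediately by Cayley--Bacharach applied to the nine intersection points of two cubics, but for larger $n$ the base locus of the net $\langle g_1,g_2,g_3\rangle$ has enough slack that Bezout alone does not suffice, and the rigidity of the total-redundancy hypothesis (``every node of $\Xset$ already lies in the common base locus of $V'_B=V$'') must be combined with a delicate case analysis on how the $g_i$ can share a common component.
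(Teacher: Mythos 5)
Your proposal does not prove the statement it was asked to prove. The statement is Theorem \ref{hk} itself: an $n$-independent set of $d(n,k-2)+2$ nodes, $3\le k\le n-1$, admits at most four linearly independent curves of degree $\le k$ through all its nodes, with the case of exactly four characterized by a maximal curve of degree $k-2$ missing exactly two nodes. What you have sketched is instead a proof of Proposition \ref{hh} --- the $d(n,n-3)+3$-node, ``at most three curves'' statement --- and your very first step is to invoke Theorem \ref{hk} with $k=n-1$ applied to $\Xset\setminus\{B\}$. As a proof of Theorem \ref{hk} this is circular: the main ingredient of your argument is the conclusion you were supposed to establish. Note also that the paper contains no proof of Theorem \ref{hk} to compare against; it is imported wholesale from \cite{HK} and then used to prove Proposition \ref{hh}. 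A genuine proof of Theorem \ref{hk} would have to be built independently (e.g., by descending to $d(n,k-2)+1$ nodes and working directly with maximal curves of degree $k-2$ via Propositions \ref{maxcurve} and \ref{maxcor}); none of that is present here.

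Even read charitably as an attempt at Proposition \ref{hh}, the proposal is incomplete precisely where the real difficulty lies. Your treatment of the ``only if'' direction reduces to the subcase in which no node of $\Xset$ has an $(n-1)$-fundamental polynomial and the greatest common divisor of a basis of $\Pset_{n-1,\Xset}$ has degree $m\le n-3$; there you offer only Bezout bounds plus an announced ``delicate case analysis,'' and for $m\le n-4$ no argument at all. You acknowledge this yourself in the final paragraph. The paper closes exactly this gap by a quite different device: it adjoins two auxiliary nodes $B_1,B_2\notin\Xset$ in general position subject to conditions $i)$--$iv)$, analyzes the intersection of the line $\ell_{12}$ with a curve $s$ in the span of $\sigma_1,\sigma_2,\sigma_3$ vanishing at $B_1,B_2$, and shows through Lemma \ref{2sigma} and an induction on intersection multiplicities that the remaining $n-3$ intersection points are common to $\sigma_1,\sigma_2,\sigma_3$, forcing a common factor $q\in\Pi_{n-3}$. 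That step is the heart of the matter and is not replaced by anything in your sketch.
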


\section{Proof of Proposition \ref{hh} }

Assume by way of contradiction that there are four linearly independent curves of degree
$\le n-1$ passing through all the nodes of the $n$-independent set $\Xset,$ with $\#{\mathcal X} = d(n, n-3)+3.$ Then, according to Theorem \ref{hk}, all the nodes of $\Xset$ but three belong to a maximal curve $\mu$ of degree $n-3.$
The curve $\mu$ is maximal and the remaining three nodes of ${\mathcal X},$ denoted by $A, B$ and $C,$ are outside of it: $A,B,C\notin \mu.$ Hence we have that
$${\mathcal P}_{n-1,{\mathcal X}}=\left\{p\in \Pi_{n-1}: p_{\mathcal X}=0\right\}= \left\{{q\mu : q\in \Pi_{2}},\ q(A)=q(B)=q(C)=0\right\}. $$
Thus we get readily that

\noindent $\quad \dim{\mathcal P}_{n-1,{\mathcal X}}=\dim\left\{{q\in \Pi_{2}}: q(A)=q(B)=q(C)=0\right\}$ $=\dim{\mathcal P}_{2, \{A,B,C\}}\\ =6-3=3,$
which contradicts our assumption.
Note that in the last equality we use Proposition \ref{PnX} and the fact that any three nodes are $2$-independent.

Now, let us verify the part ``if''. By assuming that there is a curve $\sigma$ of degree $n-3$ passing through the nodes of $\Xset$ we find readily three linearly independent curves of degree $\le n-1:\ $ $\sigma, x\sigma,
y\sigma,$ passing through $\Xset.$  While if we assume that all the nodes of  ${\mathcal X}$ but three lie in a curve $\mu$ of degree $n-3$ then above evaluation shows that $\dim{\mathcal P}_{n-1,{\mathcal X}}=3.$

\noindent Note that till here the proof was similar to the proof of Theorem \ref{hkv} in \cite{HKV}.

Finally, let us verify the part ``only if''.
Denote the three  curves passing through all the nodes of the set ${\mathcal X}$  by $\sigma_1,\sigma_2,\sigma_3.$
If one of them is of degree $n-2$ then the conclusion of Theorem is satisfied and we are done. Thus, we may assume that each curve is of exact degree $n-1$ and has no multiple components.

We start with two  nodes  $B_1,B_2\notin{\mathcal X}$ for which the following conditions are satisfied, where  the line between $B_1$ and $B_2$ is denoted by $\ell_{12}.$

\noindent $i)$ The nodes $B_1,B_2$ do not belong to the curves $\sigma_1,\sigma_2,\sigma_3$;

\noindent $ii)$ The set ${\mathcal X}\cup \{B_1,B_2\}$ is $n$-independent;

\noindent $iii)$ The line $\ell_{12}$ does not pass through any node from ${\mathcal X};$

\noindent $iv)$ The line $\ell_{12}$ intersects each of the curves $\sigma_1,\sigma_2,\sigma_3,$ at $n-1$ different points. Moreover, it intersects any two different components of these curves at different points.

Let us verify that one can find such two nodes. Indeed, in view of Lemma \ref{ext}, we can start by choosing some nodes $B_i', i=1,2,$ satisfying the conditions
$i)$ and $ii)$.  Then, according to Lemma \ref{eps'}, for some positive $\epsilon$ any two nodes
in the $\epsilon$ neighborhoods of $B_i', i=1,2,$ respectively, satisfy the first two conditions.

Next, from these  neighborhoods, in view of Theorem \ref{intn},  we can choose the nodes $B_i,\ i=1,2,$
satisfying the condition $iii)$ and $iv)$ too. Let us mention that to get the part ``Moreover'' of $iv)$ we apply Theorem \ref{intn} for the curve consisting of all different components of the curves
$\sigma_1,\sigma_2,\sigma_3.$

In the proof of Proposition  later we will need the following
\begin{lem}\label{2sigma}
Assume that the hypotheses of Proposition \ref{hh} hold and assume additionally that
at least one of the following conditions hold:

(a) A nontrivial linear combination of two polynomials from $\{\sigma_1,\sigma_2,\sigma_3\},$ denoted by $s_2,$
vanishes at $B_1$ and $B_2:\ s_2(B_1)=s_2(B_2)=0.$

(b) A nontrivial linear combination of the polynomials $\{\sigma_1,\sigma_2,\sigma_3\},$
denoted by $s_3,$ vanishes at $B_1, B_2,$ and $B_3 \in \ell_{12}:  s_3(B_1)=s_3(B_2)=s_3(B_3)=0,$ and the set $\Xset''':=\Xset\cup \{B_1,B_2,B_3\}$ is $n$-independent;

Then we have that the statement of Proposition \ref{hh} holds.
\end{lem}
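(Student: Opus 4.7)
The plan is to prove Lemma~\ref{2sigma} by showing, in both cases (a) and (b), that the line $\ell_{12}$ must be an irreducible component of the polynomial $s_2$ or $s_3$; once that divisibility is secured, the first alternative of Proposition~\ref{hh} follows at once, so the whole proof reduces to establishing $\ell_{12}\mid s_j$.

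To see the reduction, suppose we have $\ell_{12}\mid s_j$ for $j\in\{2,3\}$. Writing $s_j=\ell_{12}\,r$ with $r\in\Pi_{n-2}$ and recalling from (iii) that $\ell_{12}$ passes through no node of $\mathcal{X}$, the hypothesis $s_j|_{\mathcal{X}}=0$ forces $r|_{\mathcal{X}}=0$. Hence $r$ is a nonzero element of $\mathcal{P}_{n-2,\mathcal{X}}$, i.e., a curve of degree at most $n-2$ passing through every node of $\mathcal{X}$, which is exactly the first alternative in the conclusion of Proposition~\ref{hh}.

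To force $\ell_{12}\mid s_j$ I would use the natural analogue of Proposition~\ref{maxline} in $\Pi_{n-1}$: a polynomial of degree at most $n-1$ with $n$ distinct zeros on a line must contain that line. The explicit zeros of $s_j$ on $\ell_{12}$ supplied by (a) or (b) are only $B_1,B_2$ (resp.\ $B_1,B_2,B_3$), which is not enough in isolation. I would produce the missing zeros by analysing $s_j|_{\ell_{12}}$ as a linear combination of the restrictions $\sigma_i|_{\ell_{12}}$: by property (iv) each $\sigma_i|_{\ell_{12}}$ is a nonzero univariate polynomial of degree exactly $n-1$ whose zero set consists of the $n-1$ transversal intersections $\sigma_i\cap\ell_{12}$, and these zero sets are pairwise disjoint across distinct components of the $\sigma_i$; by (i) no $B_k$ lies on any $\sigma_i$. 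Combining this with the $n$-independence of $\mathcal{X}\cup\{B_1,B_2\}$ (resp.\ of $\Xset'''$) and with Lemma~\ref{2cor} applied iteratively to track the subspace $\mathcal{P}_{n-1,\mathcal{X}\cup\{B_i\}}$, one should be able to show that $s_j$ restricted to $\ell_{12}$ is forced to be the zero polynomial.

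As a fallback, if the direct zero count on $\ell_{12}$ does not quite close, I would invoke Theorem~\ref{hk} on a carefully chosen $n$-independent subset $\mathcal{X}_0$ of $\mathcal{X}\cup\{B_1,B_2\}$ (resp.\ $\Xset'''$) of cardinality $d(n,n-3)+2$, and exhibit four linearly independent curves of degree at most $n-1$ through $\mathcal{X}_0$, built from $\sigma_1,\sigma_2,\sigma_3$ together with $s_j$ or an auxiliary combination not lying in $\mathrm{span}(\sigma_1,\sigma_2,\sigma_3)|_{\mathcal{X}_0}$; Theorem~\ref{hk} would then yield either a direct contradiction to the working hypothesis of exactly three linearly independent degree-$\le n-1$ curves through $\mathcal{X}$, or the second alternative in Proposition~\ref{hh}. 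The main obstacle I expect is exactly this bridging step: upgrading the two or three explicit zeros supplied by (a) or (b) into a genuine factor $\ell_{12}$ of $s_j$, which requires tight dimensional bookkeeping in $\mathcal{P}_{n-1,\mathcal{X}\cup\{B_i\}}$ together with a careful exploitation of the transversality condition (iv) and the non-vanishing condition (i).
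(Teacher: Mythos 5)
Your reduction is fine as far as it goes: if $\ell_{12}\mid s_j$, then writing $s_j=\ell_{12}r$, $r\in\Pi_{n-2}$, and using (iii) indeed gives $r|_{\Xset}=0$, i.e., the first alternative of Proposition~\ref{hh}. But the heart of the lemma is precisely the divisibility $\ell_{12}\mid s_j$, and your proposal does not prove it; the step ``one should be able to show that $s_j$ restricted to $\ell_{12}$ is forced to be the zero polynomial'' is left as a hope, with no mechanism that produces the missing $n-3$ (resp.\ $n-4$) zeros on the line. Worse, in case (a) the target you set yourself is not the right one: under hypothesis (a) the polynomial $s_2$ has only the two prescribed zeros $B_1,B_2$ on $\ell_{12}$, and nothing forces $s_2|_{\ell_{12}}\equiv 0$; the paper handles (a) by a dichotomy --- either $s_2|_{\ell_{12}}=0$ (then Proposition~\ref{maxline} gives the factorization and one is done), or there is $B_3\in\ell_{12}$ with $s_2(B_3)\neq 0$, in which case $s_2$ serves as an $n$-fundamental polynomial for $B_3$, Lemma~\ref{XA} gives the $n$-independence of $\Xset''':=\Xset\cup\{B_1,B_2,B_3\}$, and a new combination (now involving $\sigma_3$, found via Lemma~\ref{2cor}) is used to pass to case (b). So proving $\ell_{12}\mid s_2$ for the given $s_2$ is not what happens; the divisibility is obtained only for a possibly different combination.

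The missing idea is the maximal-curve argument that the paper uses for case (b): since $\#\Xset'''=d(n,n-3)+6=d(n,n-1)-1$ and $\Xset'''\subset s_3$ is $n$-independent, Proposition~\ref{extcurve} lets one add a single node $B$ so that $\Yset=\Xset'''\cup\{B\}$ is a maximal $n$-independent set on the degree-$(n-1)$ curve $s_3$; Lemma~\ref{2cor} then supplies a second combination $s\neq s_3$ of $\sigma_1,\sigma_2,\sigma_3$ vanishing on $\Xset\cup\{B\}$, so that $s\ell_{12}\in\Pi_n$ vanishes on all of $\Yset$, and the characterization of maximal curves (Proposition~\ref{maxcor}) forces $s\ell_{12}=s_3\ell$ with $\ell\in\Pi_1$, $\ell\neq\ell_{12}$, whence $\ell_{12}\mid s_3$. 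None of your suggested tools (counting zeros of $\sigma_i|_{\ell_{12}}$ via (iv) and (i), or ``iterating'' Lemma~\ref{2cor}) substitutes for this, and your fallback via Theorem~\ref{hk} also does not close: the four linearly independent curves you would need through a subset of cardinality $d(n,n-3)+2$ are not exhibited, since $s_j$ already lies in the span of $\sigma_1,\sigma_2,\sigma_3$ and under the ``only if'' hypothesis that span has dimension exactly three on $\Xset$. As it stands, the proposal correctly identifies where the difficulty sits but leaves exactly that step unproved.
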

\begin{proof} Let us start with (b). In view of Proposition \ref{extcurve} we can extend the set $\Xset'''$ till a maximal $n$-independent set ${\mathcal Y}\subset s_3,$ by adding $d(n,n-1)-(d(n,n-3)+3)-3=1$ node, denoted by $B,$ i.e.,
$\mathcal Y= \Xset'''\cup\{B\}.$

Thus $s_3$ is a maximal curve of degree $n-1$ for the node set $\Yset.$

Then, in view of Lemma \ref{2cor}, we can find a nontrivial linear combination $s$ of $\sigma_1,\sigma_2,\sigma_3,$ such that $s$ differs from $s_3$ and vanishes on $\mathcal X\cup\{B\}.$

Now consider the polynomial $s\ell_{12}\in \Pi_n,$ which vanishes on the node set ${\mathcal Y}.$  By Proposition \ref{maxcor} we conclude that
$$s\ell_{12}=s_3\ell,\ \hbox{where}\ \ell\in\Pi_1.$$
\noindent The line $\ell_{12}$ differs from $\ell,$ since $s$ differs from $s_3.$ Therefore we get that
\begin{equation}\label{n-2}s_3=\ell_{12}q,\ \hbox{where}\ q\in\Pi_{n-2}.\end{equation}

Now, by using $iii),$ we obtain that $q|_{\Xset}=0.$ Hence the statement of Proposition \ref{hh} holds.

(a) Assume, without loss of generality, that $s_2:=c_1\sigma_1+c_2 \sigma_2,\ s_2\neq 0,$ and $s_2(B_1)=s_2(B_2)=0.$

Let us show that there is a node $B_3\in \ell_{12}$ such that $s_2(B_3)\neq 0.$
Indeed, assume conversely that $s_2|_{\ell_{12}}=0.$ Then, by Proposition \ref{maxline}, we obtain that
$$s_2=\ell_{12}q,\ q\in\Pi_{n-2},$$
which finishes the proof in the same way as the relation \eqref{n-2}.

Now, note that $s_2$ is a fundamental polynomial for $B_3\in \Xset''':=\Xset\cup\{B_1,B_2,B_3\}.$ By Lemma \ref{XA} the set
$\Xset'''$ is  is $n$-independent.

Then assume, in view of Lemma \ref{2cor}, that $s$ is a nontrivial linear combination of $s_2$ and $\sigma_3$ such that $s(B_3)=0,$ implying that $s|_{\Xset'''}=0.$
Thus the hypothesis of (b) is satisfied.
\end{proof}

Next, let us continue the proof of Proposition \ref{hh}.

By using Lemma \ref{2cor}, consider a nontrivial linear combination of $\sigma_1,\sigma_2,\sigma_3,$ denoted by $s,$ that vanishes at $B_1$ and $B_2.$ Set $\Xset'':=\Xset\cup\{B_1,B_2\}.$

Denote the set of intersection points of the line $\ell_{12}$ and the curve $s, \deg s=n-1,$ by $\mathcal I:=\ell_{12}\cap s.$ We have that $\#\mathcal I=n-1,$ counting also the multiplicities.
Of course $B_1,B_2\in\mathcal I.$

{\bf Case 1.} First consider the case when one of $B_1,B_2,$ say $B_1,$ is a multiple point of intersection, i.e., $D_{\bf a} s(B_1)=0,$ where ${\bf a}$ is the direction vector of the line $\ell_{12}.$

\noindent Let us prove that the set $\Yset:=\Xset''\cup \{B_1^{(1)}\}=\Xset\cup\{B_1,B_2, B_1^{(1)}\}$ is $n$-independent, where $B_1^{(1)}$ means the directional derivative node with the direction ${\bf a}$ at $B_1.$ According to Lemma \ref{XA} we need to point out a fundamental polynomial $q\in \Pi_n,$ for $B_1^{(1)}\in\Yset,$ i.e., $q|_{\Xset''}=0$ and $D_{\bf a} q(B_1)\neq 0.$

For this end consider a nontrivial polynomial $s_0:=c_1\sigma_1+c_2 \sigma_2,\ s_0\neq 0,$ which vanishes at $B_2:\ s_0(B_2)=0.$

In view of  Lemma \ref{2sigma} we may assume that $s_0(B_1)\neq 0.$

Then consider  a line $\ell$ passing through $B_1$ with a direction vector different from ${\bf a}.$
One can verify readily that the polynomial $q:=\ell s_0$ is a desired polynomial.
Indeed, we have that $q|_{\Xset''}=0.$ Then we have that

$$D_{\bf a} q(B_1)=D_{\bf a} [\ell s_0](B_1)$$ $$=(D_{\bf a} \ell)(B_1)s_0(B_1)+\ell(B_1)D_{\bf a} s_0(B_1)=(D_{\bf a} \ell)(B_1)s_0(B_1)\neq 0.$$

Thus the set $\Yset$ is $n$-independent and hence this case can be proved in the same way as Lemma \ref{2sigma} (b).

{\bf Case 2.} It remains to consider the case when both $B_1$ and $B_2$ are simple points of intersection.
We have that $\#\mathcal I=n-1\ge 3.$
Consider another point of intersection of $\ell_{12}$ and $s:\ B\in \mathcal I,\ B\neq B_1,B_2.$

In view of Lemma \ref{2sigma}, (b), we may assume the following

\noindent {\bf Assumption 1.} \emph{The set $\Xset''\cup\{B\}=\Xset\cup\{B_1,B_2,B\}$ is $n$-dependent.}

This here means that
$\ p\in\Pi_{n},\ p|_{\Xset''}=0 \implies p(B)=0.$

Now consider two nontrivial linear combinations $s_1, s_2$ of $\sigma_1,\sigma_2$ such that
$s_1(B_2)=s_2(B_1)=0.$

By Lemma \ref{2sigma}, (a), we get that $s_i(B_i)\neq 0,\ i=1,2.$
Assume, without loss of generality, that $s_i(B_i)=1,\ i=1,2.$

Next let us show that $s_i(B)=0,\ i=1,2.$ Let say $i=1.$ Consider the polynomial $q:=\ell s_1\in \Pi_n,$ where the line $\ell$ passes through $B_1$ and does not pass through $B.$ We have that $q(B_1)=q(B_2)=0.$ By using Assumption 1 and Lemma \ref {XA} we get that $q(B)=0$ hence $s_1(B)=0.$

Now we are in a position to show that $\sigma_1(B)=\sigma_2(B)=\sigma_3(B)=0.$ 

Let us show for example that $\sigma_1(B)=0.$

Consider the polynomial $p=\sigma_1-c_1s_1-c_2s_2,$ where $c_i=\sigma_1(B_i).$
We get readily that $p(B_1)=p(B_2)=0.$ Hence, in view of Assumption 1, as above, we get that $p(B)=0.$ It remains to note that $\sigma_1(B)=p(B)=0.$

Next suppose that the point $B$ is multiple:

$s(B)=D_{\bf a}s(B)=\ldots,D^{(k)}_{\bf a}s(B)=0,\ k\in\mathbb N.$

In view of Lemma \ref{2sigma}, (b), we may assume the following

\noindent {\bf Assumption 2.} \emph{The set $\Xset''\cup\{B^{(i)}\},\  i=0,\ldots, k$ is $n$-dependent.}

This here means that
\begin{equation} \label{DD}p\in\Pi_{n},\ p|_{\Xset''}=0 \implies p(B)=D_{\bf a}p(B)=\ldots=D^{(k)}_{\bf a}p(B)=0.\end{equation}
Now consider the above defined polynomials  $s_1$ and $s_2$ with
$$s_1(B_1)=1, s_1(B_2)=s_1(B)=0,\ \ s_2(B_2)=1, s_2(B_1)=s_1(B)=0.$$

By using induction on $k$ let us show that
\begin{equation} \label{0k} D^{(i)}_{\bf a}s_j(B)=0,\ i=0,1,\ldots,k,\ j=1,2.
\end{equation}
\noindent Let say $j=1.$ The first step of induction is the above considered case $k=0.$  Assume that the case of $k-1$ is true, i.e., the first $k$ equalities in \eqref{0k} hold. Let us prove the last one, i.e., $D^{(k)}_{\bf a}p(B)=0.$

Consider the polynomial $q:=s_1\ell\in \Pi_n,$ where the line $\ell$ passes through $B_1$ and does not pass through $B.$ We have that $q(B_1)=q(B_2)=0.$ In view of Assumption 2 we get that
$$0=D_{\bf a}^{(k)}[s_1\ell](B)=D^{(k)}_{\bf a}s_1(B)\ell(B)+kD_{\bf a}^{(k-1)}s_1(B)D_{\bf a}\ell(B)=D^{(k)}_{\bf a}s_1(B)\ell(B).$$
Since $\ell(B)\neq0$ we conclude that $D^{(k)}_{\bf a}s_1(B)=0.$

Now we are in a position to show that
\begin{equation} \label{0sigmak} D^{(i)}_{\bf a}\sigma_1(B)= D^{(i)}_{\bf a}\sigma_2(B)= D^{(i)}_{\bf a}\sigma_3(B)=0,\ i=0,1,\ldots,k.\end{equation}
Let us prove say equalities  with $\sigma_1.$
Consider the polynomial
\begin{equation} \label{ssigma}p=\sigma_1-c_1s_1-c_2s_2,\ \hbox{where}\ c_i=\sigma_1(B_i).
\end{equation}
We get readily that $p(B_1)=p(B_2)=0.$ Hence, in view of Assumption 2, as above, we get that $p(B)=D_{\bf a}p(B)=\ldots=D^{(k)}_{\bf a}p(B)=0.$ It remains to use the relations \eqref{0k}  and \eqref{ssigma}.

Hence except the two intersection points $B_1,B_2\in \mathcal I:=\ell_{12}\cap ,s$ all other $n-3$ points, counting also the multiplicities, are common for the three curves $\sigma_1,\sigma_2,$ and $\sigma_3.$

 From this, in view of the condition $(iv)$ (page 5), we conclude that the above three polynomials $\sigma_1,\sigma_2,$ and $\sigma_3,$ have a common divisor $q\in\Pi_{n-3}:$

 $$\sigma_1=\beta_1 q,\quad\sigma_2=\beta_2 q,\quad\sigma_3=\beta_3 q,\ \hbox{where}\ \beta_i\in\Pi_2.$$
  Therefore we have that
\begin{equation} \label{qb}\Xset\subset \sigma_1\cap\sigma_2\cap\sigma_3\subset  q\cup[\beta_1\cap\beta_2\cap\beta_3].\end{equation}

 Now consider two cases for $\Bset:=\beta_1\cap\beta_2\cap\beta_3:$

 {\bf Case (a)},  $\#\Bset\ge 4.$ 
 
 According to Proposition \ref{PnX} any subset $\Aset\subset\Bset$ with $\#\Aset=4$ is $2$-dependent. From here we obtain readily that the points of $\Aset$ are collinear. Hence all the points of $\Bset$ are collinear: $\Bset\subset\ell\in\Pi_1.$

 Now we readily get that $\ell$ is a common divisor of $\beta_1,\beta_2,$ and $\beta_3,$     i.e.,
 $$\beta_1=\ell_1\ell,\quad\beta_2=\ell_2\ell,\quad\beta_3=\ell_3\ell,$$
 where $\ell_i\in\Pi_1.$
Thus, as above, we get that

\begin{equation}\label{Bl}\Bset\subset \ell\cup[\ell_1\cap\ell_2\cap\ell_3]\subset \ell.\end{equation}

The last relation here we get from the fact that the polynomials $\sigma_1, \sigma _2,\sigma_3,$ and hence the polynomials $\ell_1, \ell_2\, \ell_3,$ are linearly independent and hence $\ell_1\cap\ell_2\cap\ell_3=\emptyset.$

Finally, we get from \eqref{qb} and \eqref{Bl} that
$$\Xset\subset q\cup\ell,$$
or, in other words, all the nodes of ${\mathcal X}$  lie in a  curve of degree $n-2,$ namely in the curve $q\ell\in\Pi_{n-2}.$

{\bf Case (b)},  $\#\Bset\le 3.$ 

In this case we obtain from \eqref{qb} that
all the nodes of  ${\mathcal X}$ but $\le 3$ lie in a curve $q$ of degree $n-3.$ From here we readily conclude that $q$ is a maximal curve and exactly $3$ nodes of $\Xset$ are outside of it. 

Thus Proposition \ref{hh} is proved.

Finally note that in view of Theorem \ref{hkv} and Proposition \ref{hh} one can formulate the following
\begin{thm}\label{hkv'}
Assume that ${\mathcal X}$ is an $n$-independent set of $d(n, k-2)+3$ nodes with $3\le k\le n-1.$ Then at most three linearly independent curves of degree
$\le k$ may pass through all the nodes of ${\mathcal X}.$ Moreover, there are such three curves for the set ${\mathcal X}$ if and only if all the nodes of ${\mathcal X}$  lie in a  curve of degree $k-1,$ or all the nodes of  ${\mathcal X}$ but three lie in a (maximal) curve of degree $k-2.$
\end{thm}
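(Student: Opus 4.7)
The plan is to derive Theorem \ref{hkv'} as a direct consequence of two results that are already at our disposal: Theorem \ref{hkv}, cited from \cite{HKV}, which establishes the claim for $3 \le k \le n-2$, and Proposition \ref{hh}, just proved, which establishes it for the single remaining value $k=n-1$. My first step is therefore simply a case split on $k$.

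For $3 \le k \le n-2$, the hypotheses of Theorem \ref{hkv'} ($\mathcal{X}$ being $n$-independent with $\#\mathcal{X}=d(n,k-2)+3$) and the three-part conclusion (at most three linearly independent curves of degree $\le k$, together with the characterization of when equality holds) read verbatim as those of Theorem \ref{hkv}. So this range requires nothing beyond invoking that theorem. For $k=n-1$, the hypothesis specializes to $\#\mathcal{X}=d(n,n-3)+3$ and the conclusion becomes literally the statement of Proposition \ref{hh}; invoking the proposition finishes this case. Since $\{3,\ldots,n-2\}\cup\{n-1\}=\{3,\ldots,n-1\}$, the two cases together exhaust the range asserted in Theorem \ref{hkv'}.

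I do not foresee any real obstacle here, because Theorem \ref{hkv'} is really just a consolidated reformulation of known facts: it declares that the same characterization holds uniformly across all $k$ in the combined range, and both partial results have already supplied matching conclusions on their respective subranges. All of the substantive difficulty of the paper sits in the proof of Proposition \ref{hh}, which handles the boundary case $k=n-1$ that is not covered by Theorem \ref{hkv}; once that proposition is in hand, the passage from the two partial statements to Theorem \ref{hkv'} is essentially notational.
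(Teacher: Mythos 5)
Your proposal is correct and matches the paper's own treatment: Theorem \ref{hkv'} is stated there precisely as the union of Theorem \ref{hkv} (covering $3\le k\le n-2$) and Proposition \ref{hh} (covering $k=n-1$), with the case split being the entire argument. Nothing further is needed.
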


\vspace{1cm}


{\def\section*#1{}
\vskip 5pt
\begin{center}
{\bf REFERENCES}
\end{center}

The work was carried out under grant 21T-A055 from the Scientific Committee of the Ministry of ESCS RA.


\end{document}